\newcommand{\bC}{{\mathbb C}}
\newcommand{\bR}{{\mathbb R}}
\newcommand{\bS}{{\mathbb S}}
\newcommand{\bZ}{{\mathbb Z}}
\newcommand{\fg}{{\mathfrak g}}
\newcommand{\wfg}{{\widetilde{\mathfrak g}}}
\newcommand{\fh}{{\mathfrak h}}
\newcommand{\wfh}{{\widetilde{\mathfrak h}}}
\newcommand{\fk}{{\mathfrak k}}
\newcommand{\Lie}{{{\mathcal Lie}}}
\newcommand{\Red}{{{\mathcal Red}}}
\newcommand{\sC}{{\mathcal C}} 
\newcommand{\sG}{{\mathcal G}} 
\newcommand{\sH}{{\mathcal H}}
\newcommand{\sI}{{\mathcal I}}
\newcommand{\sK}{{\mathcal K}}
\newcommand{\sT}{{\mathcal T}}
\newcommand{\sO}{{\mathcal O}} 
\newcommand{\sR}{{\mathcal R}} 
\newcommand{\sV}{{\mathcal V}}
\newcommand{\sU}{{\mathcal U}}
\newcommand{\Ind}{{\mathrm{Ind}}}
\newcommand{\aut}{{\mathrm{Aut}}_{\otimes}}
\newcommand{\SU}{{\mathrm{SU}}} 
\newcommand{\SL}{{\mathrm{SL}}} 
\newcommand{\GL}{{\mathrm{GL}}} 
\renewcommand{\mod}{{\mathrm{Mod}}} 
\newcommand{\im}{{\mathrm{im}}}
\newtheorem{theorem}{Theorem}[section]
\newtheorem{prop}[theorem]{Proposition}
\newtheorem{lemma}[theorem]{Lemma}
\newtheorem{cor}[theorem]{Corollary}
\begin{document}

\title{Compact Lie Groups and Complex Reductive Groups}
\author{John Jones}
\email{jdsjones200@gmail.com}
%\address{Department of Mathematics, University of Warwick, Coventry, CV4 7AL, UK}
\author{Dmitriy Rumynin}
\email{D.Rumynin@warwick.ac.uk}
%\address{Department of Mathematics, University of Warwick, Coventry, CV4 7AL, UK}
%  \newline
%\hspace*{0.31cm}  Associated member of Laboratory of Algebraic Geometry, National
%Research University Higher School of Economics, Russia}
\author{Adam Thomas}
\email{Adam.R.Thomas@warwick.ac.uk}
\address{Department of Mathematics, University of Warwick, Coventry, CV4 7AL, UK}
\date{August 19, 2022}
%%\subjclass[2020]{Primary 18M25, Secondary 22E46} Tannakian Categories, Representation of Semisimple groups
\subjclass[2010]{Primary 18D20, Secondary 22E46} %%Enriched Categories, Representation of Semisimple groups
\keywords{compact Lie group, reductive group, Tannaka formalism, infinity category}

\begin{abstract}
  We show that the categories of compact Lie groups and complex reductive groups (not necessarily connected)
  are homotopy equivalent topological categories.
In other words, the corresponding categories enriched in the homotopy category of topological spaces are equivalent.
This can also be interpreted as an equivalence of infinity categories.
%%The groups are not assumed to be connected. 
%%%We discuss consequences of this equivalence.
\end{abstract}

\maketitle

In this note we examine the categories
\[
\Lie = \{ 
\mbox{\textnormal{Compact Lie Groups}}  \}
\ \mbox{ and } \ 
\Red = \{
{\textnormal{Complex Reductive Groups}} \}.
\]
The morphisms are group homomorphisms, preserving the additional structures.
The categories $\Lie$ and $\Red$ %of connected compact Lie groups and connected complex reductive groups
are {\em not equivalent}, although the isomorphism classes are in one-to-one correspondence.
For instance, the groups $\SU_2$ and  $\SL_2 (\bC)$ correspond to each other but have different automorphisms.

The aim of this note is to use Homotopy Theory to fix the aforementioned lack of equivalence.
The following result is the main theorem.
\begin{theorem}\label{thm1}
There exists a complexification functor
$\sT : \Lie 
\longrightarrow
\Red$
with the following four properties.
\begin{enumerate}[label=\normalfont(\roman*)]
\item The group $G\in\Lie$ is a maximal compact subgroup of $\sT (G)$.
%%\item A group $G\in\Lie$ is homotopically equivalent to $\sT (G)$.
\item The functor $\sT$ is injective on morphisms. 
\item The functor $\sT$ is essentially surjective on objects.
\item For all $H,G\in\Lie$ the embedding $\sT_{H,G} : \Lie (H,G) \rightarrow \Red (\sT(H),\sT(G))$ is a
homotopy equivalence.  Here the hom-sets are equipped with the compact-open topology.
%  The functor $\sT$ yields an equivalence of $\infty$-categories $[\![\Lie]\!]$ and $[\![\Red]\!]$. 
%\item Consider reductive groups $\sH$, $\sG$ with chosen maximal compact subgroups $H$, $G$. If $f: \sH \rightarrow \sG$ is a homomorphism,
%then there exists a homotopy $f_t$ in $\Red (\sH, \sG)$ such that $f_0=\phi$ and $f_1 (H) \subseteq G$.
\end{enumerate}
\end{theorem}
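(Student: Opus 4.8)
The plan is to build $\sT$ as the \emph{universal complexification}: on objects set $\sT(G)=G_\bC$, the universal complexification of the compact Lie group $G$, characterised by the natural bijection $\hom_{\Lie}(G,K)\cong\hom_{\mathrm{hol}}(G_\bC,K)$ for every complex Lie group $K$; on morphisms send $f\colon H\to G$ to the unique holomorphic extension $f_\bC\colon H_\bC\to G_\bC$. Functoriality is immediate from the uniqueness in the universal property. Property (1) and the structure theory we need are classical: the Cartan decomposition $G_\bC=G\cdot\exp(i\fg)$ exhibits $G$ as a maximal compact subgroup and gives a $G$-equivariant diffeomorphism $G_\bC\cong G\times\fp$ with $\fp=i\fg$, so that $G\hookrightarrow G_\bC$ is a deformation retract. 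For property (3) one invokes the converse: a complex reductive group possesses a maximal compact subgroup, unique up to conjugacy, whose universal complexification recovers it; here the hypothesis that groups need not be connected forces one to track the finite component group carefully, and this is the point at which essential surjectivity is least formal.

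The heart of the theorem is property (4), and the first move is to reduce it to a statement about a single inclusion. Applying the universal property with $K=G_\bC$ gives a homeomorphism $\hom(\sH,\sG)=\hom_{\mathrm{hol}}(H_\bC,G_\bC)\cong\hom(H,G_\bC)$ by restriction along $H\hookrightarrow H_\bC$. That this is a homeomorphism for the compact--open topologies, and not merely a bijection, needs checking, but follows because a holomorphic homomorphism is determined by its germ at the identity, i.e.\ by its restriction to the compact generating set $H$. Under this identification $\sT_{H,G}$ becomes the map $j_*\colon\hom(H,G)\to\hom(H,G_\bC)$ induced by $j\colon G\hookrightarrow G_\bC$, since $f_\bC|_H=j\circ f$. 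Injectivity of $j_*$ is obvious, which already yields property (2); it remains to prove that $j_*$ is a homotopy equivalence.

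I would prove this by producing an explicit strong deformation retraction of $\hom(H,G_\bC)$ onto its subspace $\hom(H,G)$. The geometric input is that $X=G_\bC/G\cong\fp$ carries a $G_\bC$-invariant complete simply connected metric of nonpositive curvature. Fix the basepoint $o=eG$. Given $\phi\in\hom(H,G_\bC)$, the orbit $\phi(H)\cdot o$ is compact, hence has a unique circumcentre $c(\phi)\in X$; by uniqueness $c(\phi)$ is $\phi(H)$-fixed, so $\phi(H)$ lies in the stabiliser $\mathrm{Stab}(c(\phi))$, a $G_\bC$-conjugate of $G$. Using the canonical section $s\colon X\xrightarrow{\ \sim\ }\fp\xrightarrow{\exp}G_\bC$ (so $s(x)\cdot o=x$) and the geodesic $\gamma_\phi$ from $o$ to $c(\phi)$, I set
\[
F(\phi,t)(h)=s(\gamma_\phi(t))^{-1}\,\phi(h)\,s(\gamma_\phi(t)).
\]
At $t=0$ this is $\phi$; at $t=1$ it lands in $G$ because $\phi(H)\subseteq\mathrm{Stab}(c(\phi))=s(c(\phi))\,G\,s(c(\phi))^{-1}$; and if $\phi(H)\subseteq G$ then $\phi(H)\cdot o=\{o\}$, so $c(\phi)=o$, the geodesic is constant and $F(\phi,t)=\phi$ throughout. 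Thus $F$ is the required deformation retraction and $j_*$ is a homotopy equivalence.

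The main obstacle is the continuity, in the compact--open topology, of the circumcentre assignment $\phi\mapsto c(\phi)$ and hence of $F$. This I would handle by noting that $\phi\mapsto\phi(H)\cdot o$ is continuous into the space of compact subsets of $X$ under the Hausdorff metric, and that on a Hadamard manifold the circumcentre depends continuously on its argument in that metric; the section $s$ and the geodesic $\gamma_\phi$ are then manifestly continuous in $\phi$. A secondary, bookkeeping-heavy obstacle is the nonconnected case throughout (3) and in checking that the circumcentre argument interacts correctly with the component group, though the fixed-point mechanism is itself insensitive to connectedness since it uses only the compactness of $\phi(H)$.
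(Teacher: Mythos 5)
Your proposal is essentially correct but takes a genuinely different route from the paper on the key point. The paper constructs $\sT$ by Tannaka reconstruction, $\sT(G)=\aut(F_G)$, which makes the algebraic group structure and reductivity of $\sT(G)$ immediate; your universal complexification $G_\bC$ is the same object (by Chevalley's theorem), but you get the adjunction $\hom(H,G_\bC)\cong\hom_{\mathrm{hol}}(H_\bC,G_\bC)$ for free, which the paper never uses. For part (4) the paper works with the conjugation actions of $G_1$ and $\sG_1$ on the two hom-spaces, imports two rigidity results (smoothness of orbit maps from Brion, bijectivity of orbit spaces from Vinberg), proves injectivity of the orbit-space map by differentiating the Malcev--Iwasawa decomposition $\mu(g,\alpha)=\exp(\alpha)g$, and then applies a componentwise homotopy equivalence $G_1/K\to\sG_1/\sK$ from \cite{RuTa}. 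You instead reduce via the adjunction to the inclusion $j_*:\hom(H,G)\hookrightarrow\hom(H,G_\bC)$ and build an explicit strong deformation retraction by conjugating each $\phi$ along the geodesic to the circumcentre of the orbit $\phi(H)\cdot o$ in the Hadamard manifold $G_\bC/G$ (the Cartan fixed point theorem). Your argument, once the continuity of the circumcentre in the Hausdorff metric is nailed down (it is standard CAT(0) geometry), yields the stronger conclusion that $\hom(H,G)$ is a deformation retract of $\hom(\sH,\sG)$, and it avoids the external rigidity citations; the price is that you must verify the adjunction is a homeomorphism for compact--open topologies, i.e.\ that the extension map $\psi\mapsto\tilde\psi$ is continuous (true, since uniform convergence on the compact group $H$ forces convergence of differentials, but it deserves a line). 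The one place where you assert rather than prove is essential surjectivity for disconnected groups: the paper settles this by checking the map $\sT(G)\to\sG$ separately on identity components (where $G_1$ is maximal compact in both) and on component groups (via Malcev--Iwasawa, $\pi_0(\sT(G))\cong\pi_0(G)\cong\pi_0(\sG)$); you should supply the analogous two-step check.
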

Both $\Lie$ and $\Red$ are topological categories, as in \cite[1.1.1.6]{Lur}.
Theorem~\ref{thm1} shows $\sT$ is a (weak) equivalence of topological categories in the sense of \cite[1.1.3.6]{Lur}.
Translated into the language of $\infty$-categories, cf. \cite[{\S}1.1.4, {\S}1.1.5]{Lur}, \cite[1.1]{RuTa},
this gives an equivalence between the $\infty$-categories determined by $\Lie$ and $\Red$.

Our secondary aim is to investigate how the complexification functor behaves on subgroups.
Suppose $\sG=\sT(G)$. Consider the sets
\[ \bS(G) \coloneqq \{ \textnormal{compact subgroups of } G \}, \ \ 
  \bS(\sG) \coloneqq
  \{ \textnormal{reductive subgroups of } \sG \}.\]

%\newpage

We prove the following result, which refines and extends \cite[Lemma 2.1]{BLS} to not necessarily connected subgroups.
\begin{theorem}\label{thm2}
  Suppose $G\in\Lie$ and $\sG=\sT(G)\in\Red$ is its complexification.
  The complexification functor  gives a function
$\tau: \bS(G) 
\rightarrow
\bS(\sG)$
satisfying the following properties, where $H\in\bS(G)$. % (where we denote $\sH=\tau(H)$). 
\begin{enumerate}[label=\normalfont(\roman*)]
\item The group $H$ is a maximal compact subgroup of $\tau (H)$.
%In particular, 
%the natural homomorphism $H\rightarrow \sH \rightarrow \pi_0(\sH)$ gives an isomorphism $\pi_0 (H) \rightarrow \pi_0(\sH)$.
\item The function $\tau$ yields a bijection of orbit sets  
  $\overline{\tau}: \bS(G)/G %%\textnormal{compact subgroups of } G \} /G
  \rightarrow
  \bS( \sG) / \sG$ where $G$, $\sG$ act on $\bS(G)$, $\bS(\sG)$ (respectively) by conjugation.
%%\{ \textnormal{reductive subgroups of } \sG \} / \sG$.
\item The manifold $G/H$ is homotopy equivalent to the algebraic variety $\sG / \tau (H)$.
\item $H$ is the centraliser of an involution if and only if $\tau (H)$ is the centraliser of an involution.
\end{enumerate}
\end{theorem}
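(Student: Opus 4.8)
The plan is to build $\tau$ out of the functor $\sT$ and then read off the four properties, the structural heart being a dictionary between reductive subgroups of $\sG$ and compact subgroups of $G$ via their maximal compacts. For $H\in\bS(G)$ let $\iota_H\colon H\hookrightarrow G$ denote the inclusion and set $\tau(H)\coloneqq\im\sT(\iota_H)\subseteq\sG$. First I would check $\sT(\iota_H)$ is injective: its kernel $K$ is a closed normal, hence reductive, subgroup of $\sH$, and by naturality of the maximal-compact inclusion (Theorem~\ref{thm1}(1)) the map $\sT(\iota_H)$ restricts to $\iota_H$ on $H$; a maximal compact of $K$ conjugates into $H$, where it must be trivial, forcing $K=1$. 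Thus $\tau(H)$ is a closed reductive subgroup isomorphic to $\sH$, and Theorem~\ref{thm1}(1) applied to $H$ gives property (1). The same reasoning gives $\tau(H)\cap G=H$, since $G\cap\tau(H)$ is a compact subgroup of $\tau(H)$ containing the maximal compact $H$.

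For property (3), the identity $H=G\cap\tau(H)$ yields an injection $G/H\hookrightarrow\sG/\tau(H)$ fitting into a morphism of fibre bundles
\[
\begin{CD}
H @>>> G @>>> G/H\\
@VVV @VVV @VVV\\
\tau(H) @>>> \sG @>>> \sG/\tau(H).
\end{CD}
\]
The left and middle vertical maps are the maximal-compact inclusions, which are homotopy equivalences by the Cartan decomposition. Comparing the long exact sequences of homotopy groups and applying the five lemma shows $G/H\to\sG/\tau(H)$ is a weak equivalence; as both are smooth manifolds (the target being affine by Matsushima's criterion), Whitehead's theorem upgrades this to a homotopy equivalence.

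Property (2) carries the real content. Naturality of $\sT$ gives $\tau(gHg^{-1})=g\,\tau(H)\,g^{-1}$ for $g\in G$, so $\overline\tau$ is well defined. For injectivity, suppose $\tau(H_2)=x\,\tau(H_1)\,x^{-1}$ with $x\in\sG$. Since $H_i$ is maximal compact in $\tau(H_i)$, conjugacy of maximal compacts inside $\tau(H_2)$ lets me assume $xH_1x^{-1}=H_2$; writing $x=gp$ in the Cartan decomposition $\sG=G\exp(i\fg)$ and absorbing $g\in G$, I reduce to $pH_1p^{-1}=H_2\subseteq G$ with $p\in\exp(i\fg)$. Applying the Cartan involution $\theta$ (with fixed-point group $G$) to $php^{-1}\in G$ shows $p^2$ centralises $H_1$; since squaring is a diffeomorphism of $\exp(i\fg)$ and $H_1$ normalises $\exp(i\fg)$, already $p$ centralises $H_1$, whence $H_2=H_1$, and unwinding the $G$-conjugations places $H_1,H_2$ in one $G$-orbit. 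For surjectivity, given reductive $R\le\sG$ with maximal compact $H_R$, I conjugate $H_R$ into $G$ by some $x\in\sG$; then $xRx^{-1}$ and $\tau(H)$ with $H\coloneqq xH_Rx^{-1}$ are reductive subgroups of $\sG$ sharing the maximal compact $H$, and the structural lemma below identifies them.

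The main obstacle is exactly this structural lemma---\emph{a reductive subgroup of $\sG$ is determined by its maximal compact}---in the disconnected setting, refining \cite[Lemma~2.1]{BLS}. On identity components it is classical: a connected complex reductive group is the complexification of its maximal compact, so $R^\circ$ and $\tau(H)^\circ$ coincide as the connected subgroups with Lie algebra $\fh\oplus i\fh$. The disconnected case then requires matching component groups, using that the maximal compact meets every connected component, so that $R/R^\circ\cong H/H^\circ\cong\tau(H)/\tau(H)^\circ$ and the components are pinned down by elements of $H$. Granting this, property (4) follows in both directions. If $H=C_G(\sigma)$ with $\sigma\in G$ an involution, then $C_\sG(\sigma)$ is reductive and $\theta$-stable with maximal compact $C_\sG(\sigma)\cap G=H$, so $C_\sG(\sigma)=\tau(H)$. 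Conversely, if $\tau(H)=C_\sG(s)$ then $s\in Z(\tau(H))$; since $\tau(H)$ is $\theta$-stable, every finite-order element of the abelian reductive group $Z(\tau(H))$ lies in its maximal compact $Z(\tau(H))\cap G\subseteq H\subseteq G$, so the involution $s$ lies in $G$ and $H=\tau(H)\cap G=C_\sG(s)\cap G=C_G(s)$.
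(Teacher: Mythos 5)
Your proposal is correct, and its skeleton matches the paper's: define $\tau(H)=\im\,\sT(\iota_H)$, prove $\sT(\iota_H)$ is injective by conjugating a maximal compact of the kernel into $H$, and run the remaining parts through maximal-compact-subgroup theory and the Cartan/Malcev--Iwasawa decomposition. The differences lie in how the individual parts are discharged. For part (2) the paper handles injectivity with the one-line equivariance $\tau(xHx^{-1})=x\tau(H)x^{-1}$ and surjectivity by conjugating a maximal compact of $\sK$ into $G$, leaving implicit the fact that a reductive subgroup of $\sG$ is recovered from its maximal compact (which surfaces later only as the formula $\tau(H)=\mu(H\times i\fh)$); you isolate that fact as an explicit structural lemma (identity components share the Lie algebra $\fh\oplus i\fh$, and the maximal compact meets every component) and you supply a genuine injectivity argument via the Cartan involution (from $pH_1p^{-1}=H_2\subseteq G$ deduce that $p^2$, hence $p$, centralises $H_1$), which covers the case of a conjugating element in $\sG\setminus G$ that the paper's one-liner does not explicitly address. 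For part (3) the paper quotes \cite[Lemma 2.5]{RuTa} and reduces the disconnected case to it component by component, whereas you reprove the statement from scratch via the five lemma applied to a morphism of fibre bundles; both work, yours being self-contained at the cost of the usual care with $\pi_0$ in the exact sequence of pointed sets. For part (4) the converse directions differ: the paper conjugates the finite subgroup $\langle g\rangle$ to some $\tau(\langle x\rangle)$ using part (2) and then transports centralisers, while you observe that $s$ is a central torsion element of $\tau(H)$ and therefore already lies in the maximal compact $H$ --- a cleaner route that avoids the conjugation bookkeeping. I see no gaps; the only step you defer is the disconnected case of the structural lemma, and your sketch ($R=H\cdot R^\circ$ together with $R^\circ=\tau(H)^\circ$) is exactly what is needed.
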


An example, easy to understand, is the algebraic torus $\sT^n = (\bC^{\times})^n$. A maximal compact subgroup is the topological torus
$T^n = \bR^n / \bZ^n$. The hom-spaces  are discrete spaces
\[ \Lie (T^m, T^n ) = \Red (\sT^m, \sT^n ) = \bZ^{n\times m}\]
that agree on the nose. 
For a slightly deeper example, let $\sH=\SL_2 (\bC)$ and let $\sG$ be a simply-connected simple group.
There is a natural $\sG$-equivariant map to the unipotent cone of $\sG$:
\[ \beta: \Red (\sH, \sG) \longrightarrow \sU \subseteq \sG, \ \ f \mapsto  f \left( \begin{pmatrix} 1 & 1 \\ 0 & 1 \end{pmatrix}\right) \, .\]
By the Jacobson-Morozov theorem, $\beta$ yields a bijection between the $\sG$-orbits on $\Red (\sH, \sG)$ and $\sU$. But the topology
of the two spaces is different. The cone $\sU$ is connected and the orbits are locally closed in it.
The orbits in $\Red (\sH, \sG)$ are open: they are connected components.
For $x\in \sU$, let $\sG(x)$ and $\sG^{red}(x)$ be its centraliser and the reductive part of the centraliser, respectively.
The orbit of $x$ is a quasiaffine variety $\sG/\sG(x)$.
Its inverse image under $\beta$ is an affine variety $\sG/\sG^{red}(x)$.
%Inside it, we have a compact $G$-orbit, homotopically equivalent to $\sG/\sG^{red}(x)$, that is a component of $\Lie (H,G)$.
The corresponding component of $\Lie (H,G)$ is 
a compact $G$-orbit $G/K$ inside $\sG/\sG^{red}(x)$, where $K$ is a maximal compact subgroup of $\sG^{red}(x)$.

%{\bfseries{ TODO\footnote{Our old disconnected calculation was just our confusion about the nature of compact-open subgroups -- it is not appropriate here.
%      Note that Theorem~1 is new for connected groups as well -- the last example is a non-trivial illustration.
%      I can do an example of $H$, equal to the 2x2 matrices with a determinant s.t. $\det^n=1$
%      but it is not more illuminating the example above. Shall we leave it as it is}
%    }    Add disconnected example: 2x2 matrices with determinant $\pm 1$.}

 In Section~1 we prove Theorem~\ref{thm1}
and in Section~2 we prove Theorem~\ref{thm2}. 
The final section is reserved for comments.

\section{Properties of Complexification}
For $G \in \Lie$ consider the tensor category $\sC_G = G\mbox{-}\mod$ 
of finite-dimensional
continuous 
complex representations of $G$.
Note that any continuous homomorphism of Lie groups is differentiable, so a homomorphism of Lie groups. 
Thus, $\sC_G$ is also the category of differentiable representations.
Consider the forgetful functor
$F_G:\sC_G\rightarrow \sV_\bC$, where $\sV_\bC$ is the category of finite-dimensional complex vector spaces. 
We define {\em the complexification functor} $\sT: \Lie \rightarrow \Red$ via the Tannaka reconstruction, that is $\sT (G) \coloneqq \aut (F_G)$.
In other words, an element of $\sT (G)$ is a family of linear maps
\[ (x_V)_{V\in \sC_G}, \ x_V \in \hom_{\bC} (V,V)\]
defined for each representation $V$ of $G$ satisfying the following three properties
\begin{equation} \label{cond_1}
fx_V = x_Wf , \ \   x_{V\otimes W} = x_{V}\otimes x_{W} \ \mbox{ and } \ x_{I} = \text{Id}_I.
\end{equation}
Here  $f\in\sC_G(V,W)$ and $I$ is the trivial representation.
A continuous homomorphism $f:H\rightarrow G$ defines a pull-back functor $f^\ast: \sC_G \rightarrow \sC_H$,
which allows us to define $\sT$ on morphisms
\begin{equation}\label{cond_mor}
  \aut (F_H) \rightarrow \aut (F_G), \ \ (x_U)_{U\in \sC_H} \mapsto (x_{f^{\ast}V})_{V\in \sC_G} \, .
\end{equation}

Let $A(G)$ be the Hopf algebra of matrix elements on $G$. It consists of continuous functions
%\[ G\rightarrow \bC, \ \ g \mapsto \alpha ( \rho_V (g) v) \]
%where $(V,\rho_V)\in \sC_G$, $v\in V$ and $\alpha\in V^\ast$. 
\[ G\rightarrow \bC, \ \ g \mapsto \alpha ( \rho_V (g)) \]
where $(V,\rho_V)$ runs over the objects of $\sC_G$ and $\alpha\in \hom_{\bC}(V,V)^\ast$. 
It is a commutative subalgebra of $C(G,\bC)$, the Banach algebra of continuous $\bC$-valued functions.
Indeed, if $\Theta, \Phi \in A(G)$ are realised on $V,W\in \sC_G$, then
all linear combinations are realised on $V\oplus W$ and the product $\Theta\cdot\Phi$ is realised on $V\otimes W$.
It is a Hopf algebra under 
\[ A(G)\rightarrow  A(G\times G)\xrightarrow{\phi^{-1}}  A(G)\otimes A(G), \ \ \Theta \mapsto \Theta \circ \mu_G \]
where $\mu_G : G \times G \rightarrow G$ is the group multiplication and
\[ \phi\, : \, A(G)\otimes A(G)\rightarrow  A(G\times G), \ \ \phi (\sum_i \Theta_i \otimes \Phi_i) (x,y) =  \sum_i \Theta_i (x) \Phi_i (y)\]
is the algebra homomorphism. A standard proof, which includes decomposing a representation of $G\times G$ into a direct sum 
$\oplus_i V_i \otimes W_i$ where all $V_i$ and $W_i$ are representations of $G$, shows that $\phi$ is an isomorphism.

The relation between $A(G)$ and $\sC_G$ is at  the heart of Tannaka reconstruction.
\begin{prop} \label{TG_algebraic_0}
The category of finite-dimensional $A(G)$-comodules  is equivalent to $\sC_G$ as symmetric monoidal categories.
\end{prop}  
\begin{proof}
  Given a finite dimensional  $A(G)$-comodule $(V,\varpi : V \rightarrow A(G) \otimes V)$, the corresponding representation is
  \[ (V, \rho), \ \ \rho (g) v  = \sum_{(v)} v_{(-1)}(g) v_{(0)} \mbox{ where } \ \varpi (v) = \sum_{(v)} v_{(-1)} \otimes v_{(0)} \, .\]
  In the opposite direction, given $(V, \rho)\in \sC_G$ we choose a basis $e_1,\ldots , e_n$ of $V$ and write the comodule structure explicitly as
  \[ (V, \varpi), \ \ \varpi (v) = \sum_{i} e^i ( \rho (g) v) \otimes e_i  \, .\]
 It is routine to check that these inverse equivalences of symmetric monoidal categories. 
\end{proof}

The following fact, which easily follows from Proposition~\ref{TG_algebraic_0}, is a version of Peter-Weyl Theorem.
\begin{cor} \label{TG_algebraic_1}
Let $V_i$, $i\in \sI$ be a complete set of representatives of irreducible representations in $\sC_G$.
Then $A(G)$ is isomorphic as a coalgebra to $\oplus_{i\in\sI} V_i \otimes V_i^{\ast}$.
\end{cor}  
\begin{proof}
  Since the category $\sC_G$ is semisimple,  the coalgebra $A(G)$ is cosemisimple.
Thus, it must be equal to its coradical, which is isomorphic to $\oplus_{i\in\sI} V_i \otimes V_i^{\ast}$.
\end{proof}

Let us tie up all the loose ends to get a full picture of Tannaka reconstruction.
\begin{prop} \label{TG_algebraic}
The  following statements hold for a compact Lie group $G$.
\begin{enumerate}[label=\normalfont(\roman*)]
\item The group $\sT(G)$ is isomorphic to the group of $\bC$-points of the pro-algebraic group, represented by $A(G)$.
\item The category of rational representations $\sR_{\sT(G)}$ is equivalent to $\sC_G$ as symmetric monoidal categories.
\item $\sT(G)$ is a reductive algebraic group. 
\item $\sT$ is a functor from $\Lie$ to $\Red$. 
\end{enumerate}
\end{prop}  
\begin{proof}
%  The statements (i) and (ii) are immediate. The coalgebra $A$ is constructed in such a way that
%  $\sC_G$ is equivalent as a category to the category of finite-dimensional $A$-comodules. The tensor product on $\sC_G$ determines
%  a commutative bialgebra structure on $A$. The dual $G$-module functor determines the Hopf algebra structure.
%
%  To prove (iii), choose a different complete set of representatives of irreducible representations 
%  $W_i$, $i\in \sI$, where $V_i \cong W_i$ for all $i$. Let $A'(G)$ be the corresponding Hopf algebra.
%  Choosing $G$-module isomorphisms $t_i : V_i \rightarrow W_i$ yields the isomorphism of Hopf algebras
%  \[ A(G) =  \bigoplus_{i\in\sI} V_i \otimes V_i^{\ast} \xrightarrow{\oplus \widehat{t_i}} \bigoplus_{i\in\sI} W_i \otimes W_i^{\ast} = A'(G)\]
%    where each component is given by the formula
%    \[\widehat{t} = \widehat{t_i} :  V_i \otimes V_i^{\ast}  \xrightarrow{\cong}  (\End V_i)^\ast  \longrightarrow  (\End W_i)^\ast
%    \xrightarrow{\cong} W_i \otimes W_i^{\ast} \] 
%    where the middle isomorphism is the dual of the algebra isomorphism $x\mapsto t_i^{-1}xt_i$.
%    Another choice of isomorphisms has the form  $\alpha_i t_i : V_i \rightarrow W_i$ for some $\alpha_i\in \bC^{\times}$.
%    They yield the same isomorphism $\widehat{t}$.
%
  Choose $V_i$, $i\in \sI$ as in Corollary~\ref{TG_algebraic_1}. 
  Consider the relation between  $x=(x_V) \in \sT (G)$
  and an algebra homomorphism $\phi: A(G) \rightarrow \bC$ expressed by the formula
  \begin{equation} \label{isom_TG_Gr}
    w^{\ast} (x_{V_i} (v)) = \phi (v \otimes w^{\ast}) \ \ \mbox{ for all } \ \  i\in\sI \ \mbox{ and } \ v \otimes w^{\ast} \in V_i \otimes V_i^{\ast} \, .
    \end{equation}
  Every element $x=(x_V) \in \sT (G)$ determines an algebra homomorphism $\phi$ by this formula.
  This is the homomorphism $\sT (G) \rightarrow \hom (A(G), \bC)$.

  In the opposite direction, every $\phi\in \hom (A(G), \bC)$ yields a collection $(x_i)$ of linear operators $V_i\rightarrow V_i$
  by \eqref{isom_TG_Gr}. This extends to an element $x=(x_V) \in \sT (G)$ by using the canonical decompositions using the coefficient vector
  spaces $\hom (V_i,V)$
\[ V \xrightarrow{\cong} \bigoplus_{i\in\sI} V_i \otimes \hom (V_i,V) \xrightarrow{\oplus_i (x_i \otimes I)} \bigoplus_{i\in\sI} V_i \otimes \hom (V_i,V)\xrightarrow{\cong} V \, .\]
This is the inverse homomorphism $\hom (A, \bC)\rightarrow \sT (G)$.
  Thus,  \eqref{isom_TG_Gr} gives a group isomorphism $\sT (G) \cong\hom (A,\bC)$. This proves Statement (i).

  Statement (ii) follows immediately from Statement (i) and Proposition~\ref{TG_algebraic_0}.

To prove Statement (iii) it is sufficient to show that $A(G)$ is finitely generated. Indeed, then $\sT(G)$ is an affine group scheme and any affine group scheme in characteristic zero is reduced, thus $\sT(G)$ is an algebraic group. It is reductive since the category $\sR_{\sT(G)}$ is semisimple.

Let $G_1$ be the identity component of $G$ and let $U_1, \ldots, U_k$ be irreducible $G_1$-modules whose highest weights generate the cone of dominant weights of $G_1$. Let $W$ be the direct sum of all distinct irreducible direct summands of the induced modules $\Ind_{G_1}^{G}U_i$.
The representation $W$ {\em generates} $\sC_G$,
in the sense that $\sC_G$ is the only full subcategory closed under isomorphisms, tensor products, direct sums and summands that contains $W$.
Hence, a collection $(x_U)_{U\in \sC_H}$ is uniquely determined by $x_W\in \GL (W)$
and $W\otimes W^\ast$ generates the algebra $A(G)$.

Finally, considera a Lie group homomorphism $f: H \rightarrow G$.
It yields the restriction functor $f^\ast: \sC_G \rightarrow \sC_H$
and the restriction of functions $f^{\sharp}: A(G)\rightarrow C(H,\bC)$, which is an algebra homomorphism.
If $\Phi \in A(G)$ is realised on $V\in \sC_G$,
then $f^{\sharp}(\Phi) \in C(H,\bC)$ is realised on $f^\ast(V)\in \sC_H$,
so that $f^{\sharp}$ is an algebra homomorphism $A(G)\rightarrow A(H)$.

Since $f^\ast$ is a monoidal functor, $f^{\sharp}$
is a homomorphism of Hopf algebras, representing the group homomorphism $\sT(f)$. This proves Statement (iv).
\end{proof}

Proposition~\ref{TG_algebraic} links our definition of $\sT$ with classical definitions.
The representation $W$, constructed during the proof of Statement~(ii) is faithful:
the map
\[ \varphi :\sT(G)\longrightarrow \GL(W)\, , \ \ \ (x_V)_{V\in \sC_G}\mapsto x_W \]
identifies $\sT(G)$ with a subgroup of $\GL (W)$.
The image $\im (\varphi)$ is Zariski closed, thus, adorned with an algebraic group structure,
cf. \cite[Theorem 2.11 and Proposition 2.20]{DM}. 
Historically first, Chevalley essentially constructs the Hopf algebra $A$ as the quotient of $\bC [\GL (W)]$  \cite[Ch. VI]{Che}. 
In his turn, Hochschild defines $\sT(G)$ by its universal property and then proves that it is the same subgroup of $\GL (W)$ as above
\cite[Ch. XVII]{Ho0}.

Each consecutive subsection is a proof of the corresponding part of Theorem~\ref{thm1}.
Parts (i) and (iii) are known. They can be deduced from the results in
\cite{Che}
or
\cite{Ho0}
but it is not shorter than the proofs we provide.

\subsection{$G$ is a Maximal Compact Subgroup of $\sT(G)$}
We have a natural injective group homomorphism
\begin{equation} \label{group_hom}
\iota_G :G \rightarrow \sT (G), \ \ g \mapsto (\rho_V(g))_{V}, \ \ V=(V,\rho_V)\in \sC_G. 
\end{equation}  
The topology on $\sT(G)$ is the subspace topology, induced from $\GL (W)$.
Since $\rho_W: G \rightarrow \GL (W)$ is continuous,
the homomorphism $\iota_G$ is continuous too. Thus, $G$ is naturally a compact subgroup of $\sT (G)$.

Suppose that $G$ is not maximal in $\sT(G)$. Then it is properly contained in another compact subgroup $H$. Applying $\sT$ to the embedding
$f: G\hookrightarrow H$, we get the homomorphisms of groups
\[ G \xrightarrow{f} H \xrightarrow{d} \sT(G) \xrightarrow{\sT(f)} \sT(H) \, . \]
In the opposite direction we have the pull-back functors
\[ \sC_G \xleftarrow{f^\ast}\sC_H \xleftarrow{d^\ast} \sR_{\sT(G)} \xleftarrow{\sT(f)^\ast} \sR_{\sT(H)} \, . \]
The Tannakian formalism (Proposition~\ref{TG_algebraic}) tells us that the pull-back functors $(df)^\ast$ and $(\sT(f)d)^\ast$ are equivalences of categories.
%%It follows that
Hence, the pull-back functor $f^{\ast}$ is an equivalence of categories $\sC_G \xrightarrow{\simeq} \sC_H$.  

It remains to conclude that $G=H$. One way to do this is to observe that $A(G)=A(H)$.
Then by the Peter-Weyl theorem, $C(G,\bC)= \overline{A(G)}=\overline{A(H)}=C(H,\bC)$.
%%where $C(H,\bC)$ is the Banach algebra of continuous $\bC$-valued functions.
Then $G=H$ by the Banach-Stone theorem. Note that this argument is essentially 
the Tannaka-Krein Reconstruction \cite{DoRo}.

%A purely representation-theoretic argument is possible too. 
%The left and right adjoints to the pull-back functor $f^{\ast}$ is the inverse equivalence
%$\sC_H \xrightarrow{\simeq} \sC_G$. It must be both the induction and the coinduction. 
%It follows if $\bC$ is the trivial representation of $G$, the induced representation $\bC\!\uparrow_G^H$
%is the trivial representation of $G$.
%But it is easy to see that the induced representation $\bC\!\uparrow_G^H$ is $L^2(H/G,\mu)$:
%here $\mu$ is a non-zero $H$-invariant measure on $H/G$ and the representation lies in the cocompletion of $\sC_H$. 
%Thus, we conclude that the only $L^2$-functions of $H/G$ are constant functions. It follows that $G=H$.

A purely representation-theoretic argument is possible too.
Consider $L\coloneqq L^2(H/G,\mu)$, where $\mu$ is a non-zero $H$-invariant measure on the quotient space $H/G$. 
Suppose $G\neq H$. Then $L$ contains non-constant functions.
Since every unitary $H$-module is a direct sum (as Hilbert spaces) of finite dimensional $H$-modules
\cite[Th. 7.1.4]{BaRa}, 
$L$  contains a non-trivial finite dimensional simple $H$-submodule $V$. Evaluation at the coset of the identity
\[ t : V \longrightarrow \bC, \ \ \ t ( \Theta (x)) = \Theta ( G) = \Theta (1_H G) \] 
%is a homomorphism of $G$-modules with non-zero kernel. This yields a contradiction since $V$ is irreducible, while $f^{\ast} (V)$ is not.
is a non-zero homomorphism of $G$-modules. This yields a contradiction since $V$ and $\bC$ are distinct irreducible $H$-modules,
so $t$ cannot be a homomorphism of $H$-modules.

%The left and right adjoints to the pull-back functor $f^{\ast}$ is the inverse equivalence
%$\sC_H \xrightarrow{\simeq} \sC_G$. It must be both the induction and the coinduction. 
%It follows if $\bC$ is the trivial representation of $G$, the induced representation $\bC\!\uparrow_G^H$
%is the trivial representation of $G$.
%But it is easy to see that the induced representation $\bC\!\uparrow_G^H$ is $L^2(H/G,\mu)$:
%here $\mu$ is a non-zero $H$-invariant measure on $H/G$ and the representation lies in the cocompletion of $\sC_H$. 
%Thus, we conclude that the only $L^2$-functions of $H/G$ are constant functions. It follows that $G=H$.

\subsection{Injective on Morphisms}
The map $\sT_{H,G} : \Lie (H,G) \rightarrow \Red (\sT(H),\sT (G))$ is injective
because, as is clear from Equations \eqref{cond_mor} and \eqref{group_hom}, 
the restriction of $\sT(f):\sT(H)\rightarrow\sT(G)$ to the compact group $H$
is $f:H\rightarrow G \subset \sT(G)$.

\subsection{Homotopy Equivalence on Morphisms}
Let $\sG=\sT(G)$ and $\sH=\sT(H)$.
We need to prove that the embedding $\sT = \sT_{H,G} : \Lie (H,G) \rightarrow \Red (\sH,\sG)$ is a
homotopy equivalence for all $H,G\in\Lie$.
We explain quickly why it holds and check six technical steps in subsections below.

The identity components $G_1$ and $\sG_1$ act on the topological spaces $\Lie (H,G)$ and $\Red (\sH,\sG)$ by conjugation
\[
  g\cdot \phi := \gamma (g)\phi: h \mapsto g\phi (h) g^{-1}, \quad \text{for } \, g \in G_1 \ (\mbox{or }\sG_1), \
  \phi \in \Lie (H,G) \ (\mbox{or } \Red(\sH,\sG)).
\]
The orbits of these two actions are connected components and they are in one-to-one correspondence. Indeed, 
choosing representatives $\phi_i$ for the $G_1$-orbits on $\Lie (H,G)$ we can match-up the components:
\[
\Lie (H,G)=\coprod_{i\in I} G_1 \cdot \phi_i
\ \ \mbox{ and } \ \ 
\Red (\sH,\sG)=\coprod_{i\in I} \sG_1 \cdot \sT(\phi_i) \, .
\]
On each component $G_1\cdot \phi_i$ the map $\sT$ looks like the map of the quotient spaces
\[
\sT_{i} \, : \, G_1/K = G_1 \cdot \phi_i \rightarrow \sG_1 / \sK = \sG_1 \cdot \sT(\phi_i), \ \ g\cdot \phi_1 \mapsto g\cdot \sT (\phi_1)
\]
where $\sK$ is the stabiliser of the image of $\sT(\phi_i)$ and $K$ is the stabiliser of $\phi_i$.
We claim that $K$ is a maximal compact subgroup of $\sK$.
It follows that each $\sT_{i}$ is a homotopy equivalence
and so is $\sT_{H,G} = \coprod_i \sT_{i}$.

\subsubsection{$\sG_1$-orbits are connected components.}
It is an old result of Demazure that the topological space $\Red (\sH,\sG)$
carries a structure of an algebraic variety such that the 
orbit maps
\[ \varpi_{\phi}: \sG \rightarrow \Red (\sH,\sG), \ \
\varphi_{\phi} (g) = \gamma(g)\phi \]
are smooth for all $\phi\in \Red (\sH,\sG)$, cf. a modern take on the topic by Brion \cite[Lemma 3.5]{Bri}. 
It follows that restrictions of the orbit maps to $\sG_1$ are smooth as well.
Hence, the $\sG_1$-orbits are open and connected, so they are connected components of
$\Red (\sH,\sG)$.

\subsubsection{$\sT_{H,G}$ is continuous}
Let us recall the decomposition in 
the Malcev-Iwasawa theorem \cite[Thm 32.5]{Str}.
Let $\wfg$ be the Lie algebra of $\sG$, $\fg$ its Lie subalgebra corresponding to the Lie algebra of $G$. Then
  $\wfg = \fg \oplus i \fg$ as adjoint $G$-modules where multiplication by $i$ is an isomorphism of $G$-modules $\fg\rightarrow i \fg$.
  The decomposition is a diffeomorphism
  \begin{equation} \label{MI_decomp}
    \mu = \mu_G: G \times i\fg \xrightarrow{\cong\ \mbox{\tiny{as spaces}}} \sG, \ \ \ (g,\alpha) \mapsto \exp(\alpha) g \, . 
  \end{equation}
  The multiplication in $\sG$ can be recovered from the data on the left: the multiplication and the adjoint action of $G$,
  the Lie bracket on $\fg$ and the Baker-Campbell-Hausdorff formula.
  
  The similar ingredients for $H$ are $\wfh = \fh \oplus i \fh$ and
  $\mu = \mu_H: H \times i\fh \rightarrow \sH$. 
Let $\phi\in \Lie (H,G)$, $d\phi: \fh \rightarrow \fg$ its differential. The homomorphism  $\sT(\phi)$ can be written explicitly as 
\begin{equation} \label{MI_decomp_phi}
  \sT (\phi) :
  \sH \xrightarrow{\mu^{-1}}
  H \times i \fh \xrightarrow{ \phi \times i d \phi}
  G \times i\fg \xrightarrow{\mu} \sG \, .
  \end{equation}
Consider compact subsets $C_1 \subseteq H$,  $C_2 \subseteq i\fh$ and open subsets  
$U_1 \subseteq G$,  $U_2 \subseteq i\fg$. The sets
\[
\sO (C_1,U_1) = \{ \phi\in \Lie (H,G) \,\mid\, \phi (C_1) \subseteq U_1 \}, \]
\[
\sO (C_1\times C_2 ,U_1\times U_2) = \{ \psi\in \Red (\sH,\sG) \,\mid\, \psi \mu(C_1\times C_2) \subseteq \mu (U_1\times U_2) \}
\]
form a subbase of the compact-open topologies on  $\Lie (H,G)$ and $\Red (\sH,\sG)$.
%%Since the exponential map $\exp$ is natural
%%\[ \exp (id\phi (C_2)) = \phi  \]
It remains to observe that, as follows from the formula~\eqref{MI_decomp_phi} and the naturality of $\exp$,
\[
\sT^{-1} (\sO (C_1\times C_2 ,U_1\times U_2)) \supseteq 
\sO (C_1,U_1)  \cap \sO ( \exp (C_2) , \exp^{-1} (U_2)) \, . 
\]

\subsubsection{One-to-one correspondence between $G_1$-orbits and $\sG_1$-orbits}
Since the map $\sT$ is $G$-equivariant, it gives the map of orbit spaces 
\[
\overline{\sT} \, {:} \,  \Lie (H,G)/G_1 \rightarrow \Red (\sH,\sG)/G_1\rightarrow \Red (\sH,\sG)/\sG_1\, , \  \ G_1 \cdot \phi \mapsto \sG_1 \cdot \sT (\phi)
\]
whose bijectivity we need to establish.

To prove surjectivity, notice 
that the image of $\sT$ consists of all such $\psi$
that $\psi (H) \subseteq G$. 
Indeed, $\psi = \sT (\psi_H : H\rightarrow G)$ by the formula~\eqref{MI_decomp_phi}.

Now pick $\psi \in \Red (\sH,\sG)$. The subgroup $\psi (H)$ is compact, so is contained in a maximal compact subgroup $G'\leq \sG$.
Since all maximal compact subgroups are conjugate, there exists $x\in\sG$ such that $xG'x^{-1}= G$.
The Malcev-Iwasawa decomposition implies that $G$ and $\sG$ have the same component group
$\Gamma \coloneqq \pi_0(G) \cong \pi_0 (\sG)$.
It follows that we can write $x=yz$ with $y\in G$ and $z\in \sG_1$.
Then $zG'z^{-1}=G$ and $[\gamma(z)\psi](H)\subseteq G$.
Thus, $\gamma(z)\psi = \sT (\phi)$ for $\phi \coloneqq \gamma(z)\psi|_H \in \Lie (H,G)$
and $\psi \in \sG_1 \cdot \sT (\phi)$.

To prove injectivity, 
consider a $\sG_1$-orbit $\sG_1 \cdot \sT(\phi)$ in the image of $\Lie (H,G)$ and the set
\[X=\{G_1 \cdot \sT(\theta) \,\mid\, G_1 \cdot \sT(\theta)  \subseteq  \sG_1 \cdot \sT(\phi)\} \]
of all $G_1$-orbits in it.
Let  $K=\phi(H_1)$. 
By Lemma~\ref{conjugate_subs}, inside each orbit $x\in X$ we can choose $\psi_x\in x$ such that
$K=\psi_x(H_1)$.
%%the compact connected subgroup $K=\psi_x(H_1)\leq \sG$ does not depend on $x$.
It is known that the group $K Z(\sG_1)$ has finite index in the normaliser $N_{\sG_1} (K)$ \cite[Th. A]{HH}.
Since $g\cdot \psi_x \in x$ for any $g\in K Z(\sG)$, the set $X$ is finite. It follows
that each orbit $x$ is open in $\sG_1 \cdot \sT(\phi)$.
Hence, the set stabiliser
\[ (\sG_1)_x \coloneqq \{ g\in \sG_1 \,\mid \, g\cdot x = x \}\]
of the orbit $x$ is  an open subgroup of finite index. Thus, $(\sG_1)_x = \sG_1$ 
and the set $X$ consists of a single $G_1$-orbit.
This proves injectivity of $\overline{\sT}$. 

\begin{lemma}  %%%\footnote{I expect this lemma to be known but I cannot find it in the literature}
  \label{conjugate_subs}
Suppose $\sG$ is a reductive group, $G$ is a  maximal compact subgroup of $\sG$, $H$ and $K$ are connected closed subgroups of $G$.
If $H$ and $K$ are conjugate in $\sG$, then they are conjugate in $G$.  
\end{lemma}
\begin{proof}
  We denote the Lie algebras of the compact groups by $\fg$, $\fk$ and $\fh$. Thanks to the Malcev-Iwasava decomposition~\eqref{MI_decomp},
  we can find $g\in G$ and $\alpha\in \fg$ such that
  \[K = \exp (i \alpha) g H g^{-1} \exp (-i \alpha)\, . \]

  Replace $H$ with $gHg^{-1}$. Without loss of generality, $K = \exp (i \alpha) H \exp (-i \alpha)$.
  Differentiating the last equality, we get $[ i \alpha, \fh ] \subseteq \fk$. We complete the proof by considering the following
  two mutually exclusive cases.
  \begin{enumerate}
  \item There exists $\beta \in \fh$ such that $[\alpha,\beta]\neq 0$. Then $[i \alpha,\beta]\not\in \fg$, a contradiction. 
  \item For all  $\beta \in \fh$ we have $[\alpha,\beta] = 0$. Since $H$ is connected, it follows that $\exp (i \alpha)$
    acts trivially on $H$ and $H=K$. The lemma is proved.
  \end{enumerate}
\end{proof}

\subsubsection{$G_1$-orbits are connected components.}
We have just proved that $G_1 \cdot \phi = \sT^{-1} (\sG_1 \cdot \sT(\phi))$.
Since $\sG_1$-orbits are open and closed, so are $G_1$-orbits.
Finally, the $G_1$-orbits are connected because $G_1$ is connected.

\subsubsection{$K$ is a maximal compact subgroup of $\sK$}
The stabiliser of a morphism $\phi$ is the centraliser of its image.
Thus, the statement follows from the next lemma.
\begin{lemma} %%%\footnote{I expect this lemma to be known but I cannot find it in the literature}
  \label{centraliser}
Suppose $\sG$ is a reductive group, $G$ is a maximal compact subgroup of $\sG$, $X\subseteq G$ is a subset.
The centraliser $C_G(X)$ is a maximal compact subgroup of the centraliser $C_{\sG}(X)$.
\end{lemma}
\begin{proof}
  Obviously, both are closed subgroups and $C_G(X)$ is a compact subgroup of $C_{\sG}(X)$.

  Consider $g\in C_{\sG}(X)$. Write it in the Malcev-Iwasawa decomposition $g= \exp(\alpha) h$ with $h\in G$ and $\alpha\in i \fg$.
  For all $x\in X$ we have
  \[\exp(\alpha) h x = gx=xg = x\exp(\alpha) h = \exp(\mbox{ad}(x)(\alpha)) xh \, .\]
  It follows that $h\in  C_{\sG}(X)$ and $\alpha \in \bigcap_{x\in X} \ker (\mbox{ad}(x))$.
  Moreover, this yields a Malcev-Iwasawa-type  decomposition $C_{\sG}(X) \cong C_{G}(X) \times \bR^m$, where
  $\bR^m = \bigcap_{x\in X} \ker (\mbox{ad}(x))$. 

  We can now conclude that $C_{G}(X)$ is maximal compact in $C_{\sG}(X)$.
  Indeed, $C_{G}(X)$ is contained in a maximal compact subgroup $H$ of  $C_{\sG}(X)$.
  Since $C_{G}(X)$ and $H$ are homotopy equivalent, they are equal.
\end{proof}

\subsubsection{Homotopy equivalence}
\label{homotopy_equiv}
Consider a component $G_1\cdot \phi_i$.
The map of quotient spaces
\[
\widetilde{\sT_{i}} \, : \, G_1/K_1  \longrightarrow \sG_1 / \sK_1, \ \ g\cdot \phi_1 \mapsto g\cdot \sT (\phi_1)
\]
is a homotopy equivalence \cite[Lemma 3.5]{RuTa} 
(note that for manifolds, a weak homotopy equivalence is  a homotopy equivalence). 
The finite group $\Gamma_i  \coloneqq \pi_0(K) \cong \pi_0 (\sK)$
acts freely on both $G_1/K_1$ and  $\sG_1 / \sK_1$
so that $\widetilde{\sT_{i}}$
is $\Gamma_i$-equivariant.
It follows that the quotient map
\[
\sT_{i} = \widetilde{\sT_{i}} / \Gamma_i \, : \, G_1/K = (G_1/K_1)/\Gamma_i \longrightarrow \sG_1 / \sK = (\sG_1/\sK_1)/\Gamma_i 
\]
is a homotopy equivalence too.

\subsection{Surjectivity on Objects}
Let $\sG$ be a reductive group. It has a maximal compact subgroup $G$ by the Malcev-Iwasawa theorem \cite[Thm 32.5]{Str}.
The pull-back $\sR_{\sG}\rightarrow \sC_G$ 
gives a homomorphism of algebraic groups $f: \sT(G)\rightarrow \sG$. 

As we have already proved, $G$ is also a maximal compact subgroup of $\sT(G)$.
By the definition of $f$, the restriction $f|_G$ is the identity map on $G$. 
Look at the differential $df$ through the prism of the Malcev-Iwasawa decomposition:
\[ df : \fg \oplus i \fg = \mbox{Lie} ( \sT (G)) \longrightarrow \fg \oplus i \fg = \mbox{Lie} (G) \, .\]
Since $f|_G=\text{Id}_G$, $df|_{\fg}=\text{Id}_{\fg}$.
It follows that $df$ is the identity and $f$ is an isomorphism.

\section{Complexification and Subgroups}
Let $H\in\bS(G)$. We define $\tau$ by $\tau (H) \coloneqq \im  (\sT(f))$, where $f:H \rightarrow G$ is the inclusion map.  
%%It is clearly a reductive subgroup of $\sG$.

\subsection{Subgroups $H$ and $\sH=\tau (H)$}
We  prove that $\sT(f)$ is injective and the first part follows directly from this fact.  Suppose $\sT(f)$ is not injective. Then it has a non-trivial kernel $\sK$, which cannot be unipotent.
So $\sK$ contains a non-trivial maximal compact subgroup $K$. Some conjugate of $hKh^{-1}$ for $h\in \sH$ must be a subgroup of $H$.
Since $\sT(f)|_{H}=f$, we find a non-trivial element in the kernel of $f$, a contradiction.

\subsection{Bijection} Since $\tau (xHx^{-1}) = x\tau (H)x^{-1}$, the map  $\overline{\tau}$ is injective.

Now pick $\sK\in\bS(\sG)$ and its maximal compact subgroup $K$. It is contained in a maximal compact subgroup $G^\prime$ of $\sG$.
%%By the Malcev-Iwasawa theorem,
Since maximal compact subgroups are conjugate, 
$G=gG^{\prime} g^{-1}$ for some $g\in\sG$. Hence, $gKg^{-1} \in \bS(g)$ and
$\overline{\tau} ([gKg^{-1}]) = [\sK]$.

\subsection{Homotopy Equivalence}
We need to show that the map
$\eta: G/H \rightarrow \sG / \sH$ given by $\eta (gH) \mapsto g\sH$
is a homotopy equivalence.
If $G$ is connected, this is the argument in Section~\ref{homotopy_equiv}. 
%\cite[Lemma 3.5]{RuTa}.
In general, any connected component of $\sG/\sH$ has a form $\sG_1 g \sH/\sH$ for some $g\in\sG$.
Since $\pi_0(G)=\pi_0 (\sG)$, there exists  $x\in H \cap \sG g$ so that
$\sG_1 g \sH/\sH = \sG_1 x \sH/\sH= \sG_1 / (x \sH x^{-1})$ and
%%we can use \cite[Lemma 3.5]{RuTa} to settle the general case.
the same argument settles the general case.

\subsection{Centralisers of Involutions}
If $x\in G$ is an involution and $H=C_G(x)$, then clearly %%\footnote{Should I say more?}
$\tau (H) = C_{\sG}(x)$. 

  Let $\tau (H) = C_{\sG}(g)$ for some involution $g\in G$.
The finite group $\langle g \rangle \in \bS (\sG)$ is conjugate to $\tau (\langle x\rangle )$ for some involution $x\in G$.
By Lemma~\ref{centraliser}, the maximal compact subgroup of $C_{\sG}(x)$ is $C_{G}(x)$.
Moreover, the reductive subgroups $\tau (H)$ and $C_{\sG}(x)$ are conjugate.
Hence, their maximal compact subgroups $H$ and $C_{G}(x)$ are conjugate, so $H$ is the centraliser of an involution.

\section{Further Comments}
\subsection{Tannakian Formalism and Compact Groups}
%%There is no useful  functor $\Red\rightarrow\Lie$
%%because there is no non-trivial homomorphism from $\PSL_2(\bC)=\Red (\SL_2(\bC),\SL_2(\bC)) \setminus \{1\}$ to a compact group.   
%In particular, there is no preferred maximal compact subgroup.
%This is clear from the Tannaka-Krein Reconstruction \cite{DoRo}. 
%%To reconstruct a compact group, we need to use the Tannaka-Krein Reconstruction \cite{DoRo}. 
%%This requires a $\star$-tensor category structure on $\sR_{\sG}$.
Reconstruction of a compact group is the topic of the Tannaka-Krein Reconstruction \cite{DoRo}. 
Starting with $\sG$, this requires a $\star$-tensor category structure on $\sR_{\sG}$.
Given a representation $(V,\rho)$, the data defines a representation $(\overline{V},\overline{\rho})$ on the conjugate vector space
$\overline{V}=V$ via the action $\alpha \cdot v = \overline{\alpha} v$.
Given an automorphism $(x_V)\in\aut (F_{\sG})$, we define its $\star$-conjugate by
$$(x_V)^\star \coloneqq (y_V) \ \mbox{ where } \ y_{\overline{V}} = x_V \, .$$
This reconstructs a compact group $\sT_c (\sG) = \{ x\in \aut (F_{\sG}) \,\mid\, x^\star = x\}$.
However, there is no canonical $\star$-structure on $\sR_{\sG}$, which can be constructed by choosing a maximal compact subgroup $G$.
For example, choose two hermitian forms on $\bC^2$ that give different unitary subgroups of $\sG=\GL_2(\bC)$.
The corresponding $\star$-structures on $\sR_{\sG}$ will be different. 

\subsection{Intersections}
Consider two subgroups $\sH,\sK\in\bS(\sG)$ and their maximal compact subgroups $H$ and $K$.
Wouldn't it be useful to have some relation between the intersections $\sH\cap\sK$ and $H\cap K$?
Notice that  $\sH\cap\sK$ does not determine $H\cap K$, or vice versa.
Yet some limited information, for instance, about the generic behaviour of the other intersection is possible.

Categorically, the intersection  $H\cap K$ is a pull-back diagram:
\[
\begin{CD}
H\cap K  @>>> K\\
@VVV @V{\subseteq}VV\\
H @>{\subseteq}>> G
\end{CD}
\]
Note that $\Lie$ and $\Red$ are not equivalent and $\sT$ does not preserve pull-back diagrams. Hence 
$\sH\cap\sK$ and $H\cap K$ are not tightly related.
On the other hand, 
%the $\infty$-equivalence\footnote{Talk about weak homotopy equivalence of topological categories. And in the introduction too...}
the homotopy equivalence of topological categories 
tells us that $\sT$ preserves the homotopy pull-back diagrams.
In other words, the corresponding pull-backs, 
the homotopy intersection groups
$H\cap^h K$ and $\sH \cap^h \sK$ are homotopy equivalent.
Recall that the standard model of the homotopy intersection is the following subgroup of the group of continuous paths
with pointwise multiplication: 
$$
H\cap^h K \coloneqq \{ \gamma: [0,1]\rightarrow G \,\mid\, \gamma(0)\in H, \; \gamma (1) \in K \} \, .
$$
%%with pointwise multiplication.  However, the following proposition shows that the homotopy intersection is not particularly helpful.
%%\begin{prop} There is a homeomorphism
%%$H\cap^h K \cong H \times (1 \cap^h K)$.
%%\end{prop}
%%\begin{proof}
%%The key maps to construct, the left-to-right homeomorphism are
%%$$H \xleftarrow{\pi_1}  H\cap^h K \xrightarrow{\pi_2} (1 \cap^h K)\, , \ 
%%\pi_1 (\gamma) = \gamma (0)\, , \
%%\pi_2 (\gamma) = \gamma (0)^{-1} \cdot \gamma .
%%$$
%%The right-to-left homeomorphism is multiplication.
%%\end{proof}

Our motivation for studying intersections comes from our interest in the relation between
the locally symmetric spaces $K\backslash G/H$ and $\sK\backslash \sG/\sH$. This relation is beyond the scope of the present paper, but we make the following useful observation.

\begin{prop} \label{intersect}
If   $H,K \in \bS(G)$, then $\tau (H\cap K) = \tau (H) \cap \tau (K)$.
\end{prop}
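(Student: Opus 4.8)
The plan is to prove the two inclusions separately. The inclusion $\tau(H\cap K)\subseteq\tau(H)\cap\tau(K)$ is immediate from functoriality: the inclusion $H\cap K\hookrightarrow G$ factors through both $H\hookrightarrow G$ and $K\hookrightarrow G$, so $\tau(H\cap K)=\im(\sT(H\cap K\hookrightarrow G))$ lies in both $\tau(H)$ and $\tau(K)$. For the reverse inclusion I set $\sM\coloneqq\tau(H)\cap\tau(K)$, a Zariski-closed (hence algebraic) subgroup of $\sG$. The strategy is to show that $\sM$ is reductive, that its maximal compact subgroup is exactly $H\cap K$, and that consequently $\sM=\tau(H\cap K)$.

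To control $\sM$ I would exploit the compact real form. Realize $\sG\subseteq\GL(W)$ with the $G$-invariant Hermitian form on $W$ chosen so that $G\subseteq U(W)$; then complex conjugation $\theta\colon s\mapsto(s^\ast)^{-1}$ is the antiholomorphic involution with $\sG^\theta=G$ and $\theta_\ast=-1$ on $i\fg$. Applying the decomposition \eqref{MI_decomp} to the reductive group $\tau(H)$, whose maximal compact is $H$ and whose Lie algebra is $\fh\oplus i\fh$, uniqueness in $\mu$ gives $\tau(H)=\mu(H\times i\fh)$, and likewise $\tau(K)=\mu(K\times i\fk)$. Each such set is visibly $\theta$-stable, since $\theta(\exp(\alpha)h)=\exp(-\alpha)h$ with $-\alpha\in i\fh$; hence $\sM$ is $\theta$-stable. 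Its Lie algebra $\fm=(\fh\oplus i\fh)\cap(\fk\oplus i\fk)$ is then $\theta$-stable and, being the Lie algebra of an algebraic subgroup, complex, so $\fm=(\fm\cap\fg)\oplus i(\fm\cap\fg)$ is the complexification of $\fm\cap\fg=\fh\cap\fk$, a subalgebra of the compact algebra $\fg$. This shows $\sM$ is reductive.

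Finally I would identify the maximal compact and conclude. Since $H$ is maximal compact in $\tau(H)$ and $\tau(H)\cap G$ is a compact subgroup of $\tau(H)$ containing $H$, maximality forces $\tau(H)\cap G=H$, and similarly $\tau(K)\cap G=K$, so $\sM\cap G=H\cap K$. It then suffices to prove that \eqref{MI_decomp} restricts to $\sM$, i.e. $\sM=\mu\big((H\cap K)\times i(\fh\cap\fk)\big)$; the same identity for $\tau(H\cap K)$, which is reductive with maximal compact $H\cap K$ and Lie algebra $(\fh\cap\fk)\oplus i(\fh\cap\fk)$, then yields $\sM=\tau(H\cap K)$. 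This restriction statement is the main obstacle. Given $s=\exp(\alpha)g\in\sM$ with $\alpha\in i\fg$, $g\in G$, the $\theta$-stability of $\sM$ gives $\theta(s)=\exp(-\alpha)g\in\sM$, hence $\exp(2\alpha)=s\,\theta(s)^{-1}\in\sM$; because $\exp(2\alpha)$ is a positive self-adjoint element, the Zariski closure of the subgroup it generates is a subtorus contained in $\sM$, forcing $2\alpha\in\fm$, so $\alpha\in\fm\cap i\fg=i(\fh\cap\fk)$ and $g=\exp(-\alpha)s\in\sM\cap G=H\cap K$. This is precisely the compatibility of the Cartan (polar) decompositions of $\sG$ and of the self-adjoint subgroup $\sM$, and is where the substantive work lies.
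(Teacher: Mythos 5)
Your proof is correct, and it shares its key ingredient with the paper's: the explicit formula $\tau(H)=\mu(H\times i\fh)=\exp(i\fh)H$ coming from the Malcev--Iwasawa decomposition \eqref{MI_decomp}. The difference is in what you do next. The paper simply observes that $\mu:G\times i\fg\rightarrow\sG$ is injective, so $\tau(H)\cap\tau(K)=\mu\bigl((H\times i\fh)\cap(K\times i\fk)\bigr)=\mu\bigl((H\cap K)\times i(\fh\cap\fk)\bigr)=\tau(H\cap K)$, and is done in two lines. You instead re-derive the polar decomposition of $\sM=\tau(H)\cap\tau(K)$ from scratch: $\theta$-stability, $\exp(2\alpha)=s\,\theta(s)^{-1}\in\sM$, and the Zariski closure of the cyclic group generated by a positive self-adjoint element being a torus whose Lie algebra contains $2\alpha$. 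This is the standard Mostow-style argument that a self-adjoint algebraic subgroup inherits the ambient Cartan decomposition; it is valid, but it is logically redundant here, since once you have written $s=\exp(\beta)h$ with $\beta\in i\fh$, $h\in H$ and $s=\exp(\gamma)k$ with $\gamma\in i\fk$, $k\in K$, uniqueness of the ambient decomposition already forces $\beta=\gamma\in i(\fh\cap\fk)$ and $h=k\in H\cap K$. What your longer route buys is a self-contained verification (reductivity of $\sM$, identification of its maximal compact and Lie algebra) that does not lean on recognising the intersection as an image under the injective map $\mu$; what it costs is invoking machinery (Zariski closures of cyclic groups, self-adjointness) well beyond what the statement needs. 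Either way the substance matches the paper's, and the one assertion both of you take for granted --- that $\tau(H)$ is exactly $\exp(i\fh)H$ inside the ambient decomposition of $\sG$ --- is the same in both arguments.
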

\begin{proof}
  Let $\wfg$ be the Lie algebra of $\sG$, and $\fg$ be its Lie subalgebra corresponding to the Lie algebra of $G$.
  For $H,K\in\bS(G)$, let us denote their corresponding Lie subalgebras by $\fh$ and $\fk$. The  Malcev-Iwasawa  map $\mu$, defined in \eqref{MI_decomp},
  gives an explicit
  formula  for the map $\tau$
  \[ \tau (H) = \mu (H\times i \fh) = \{ \exp(\beta) h, \mid\, h\in H, \beta \in i \fh\} .\]
  The proposition follows from this formula: both  $\tau (H\cap K)$ and $\tau (H) \cap \tau (K)$
  are equal to
  \[ \{ \exp(\beta)h\, \mid\, h\in H\cap K, \ \beta \in i (\fh\cap\fk)\} \]
  since the presentation of an element of $G$ as $\mu(g,\alpha) = \exp(\alpha)g$ is unique.
\end{proof}
We say that a reductive subgroup $\sH\in\bS(\sG)$ is {\em $G$-controlled}, if it is of the form $\tau (H)$ for some $H\in\bS(G)$. 
The next two corollaries are useful properties of 
the intersections of $G$-controlled subgroups. 
\begin{cor}
  $H\cap K$ is a maximal compact subgroup of $\tau (H) \cap \tau (K)$.
\end{cor}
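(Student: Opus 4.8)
The plan is to derive the corollary directly from Proposition~\ref{intersect} together with part~(1) of Theorem~\ref{thm2}, with essentially no further work. First I would observe that $H\cap K$ is a closed subgroup of the compact group $G$, hence itself compact, so that $H\cap K\in\bS(G)$ and the complexification $\tau(H\cap K)$ is defined. Applying part~(1) of Theorem~\ref{thm2} to the compact subgroup $H\cap K$, I conclude that $H\cap K$ is a maximal compact subgroup of $\tau(H\cap K)$.

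The second step is to rewrite the target group. Proposition~\ref{intersect} furnishes the equality $\tau(H\cap K)=\tau(H)\cap\tau(K)$ of subgroups of $\sG$. Substituting this identification into the conclusion of the first step yields at once that $H\cap K$ is a maximal compact subgroup of $\tau(H)\cap\tau(K)$, which is precisely the assertion. (One may also note separately that $H\cap K$ sits inside $\tau(H)\cap\tau(K)$ without invoking the proposition, since $H\cap K\subseteq H\subseteq\tau(H)$ and likewise $H\cap K\subseteq\tau(K)$; but the proposition is needed to control maximality against the full intersection rather than against $\tau(H\cap K)$.)

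I do not expect any real obstacle here: the entire content of the corollary has already been packaged into Proposition~\ref{intersect} and Theorem~\ref{thm2}(1), and the argument is a one-line combination of the two. The only point requiring a moment's care is the verification that $H\cap K$ genuinely lies in $\bS(G)$, so that both results are applicable; this is immediate, as an intersection of closed subgroups of a compact group is again a compact subgroup.
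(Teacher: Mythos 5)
Your argument is correct and is exactly the intended one: the paper leaves the corollary unproved precisely because it follows by combining Theorem~\ref{thm2}(1) applied to the compact subgroup $H\cap K$ with the identity $\tau(H\cap K)=\tau(H)\cap\tau(K)$ from Proposition~\ref{intersect}. Your added check that $H\cap K\in\bS(G)$ is a reasonable bit of diligence but raises no issue.
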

\begin{cor}
The intersection of two $G$-controlled reductive subgroups is reductive.
\end{cor}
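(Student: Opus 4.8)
The plan is to read the statement off directly from Proposition~\ref{intersect}. By definition, the two $G$-controlled subgroups have the form $\sH = \tau(H)$ and $\sK = \tau(K)$ for some $H, K \in \bS(G)$, both controlled by the \emph{same} compact group $G$. The first thing I would check is that the ordinary intersection $H \cap K$ is again an object of $\bS(G)$: it is a subgroup of $G$, and since $K$ is a closed subgroup of $G$ the set $H \cap K$ is closed in the compact group $H$, hence itself compact. Thus $H \cap K \in \bS(G)$ and $\tau(H\cap K)$ is defined.

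The core step is then to invoke Proposition~\ref{intersect}, which gives the identity
\[
\sH \cap \sK = \tau(H) \cap \tau(K) = \tau(H \cap K).
\]
Finally, since $\tau$ is a function $\bS(G) \to \bS(\sG)$ and $\bS(\sG)$ consists precisely of the reductive subgroups of $\sG$, the subgroup $\tau(H\cap K)$ is reductive by construction. Combining this with the displayed identity shows that $\sH \cap \sK$ is reductive, which is the assertion.

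I do not expect any genuine obstacle here, as the result is a formal consequence of Proposition~\ref{intersect}; the only points requiring a word of care are that both subgroups are controlled by one fixed $G$ (so that the hypotheses of the proposition apply), and that $H \cap K$ genuinely lands in $\bS(G)$. Both are immediate from the remarks above, so the proof should occupy only a couple of lines.
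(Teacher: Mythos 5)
Your proposal is correct and follows exactly the route the paper intends: the corollary is stated as an immediate consequence of Proposition~\ref{intersect}, with the only (mild) checks being that $H\cap K$ is a compact subgroup of $G$ and that $\tau$ takes values in $\bS(\sG)$, both of which you address. Nothing further is needed.
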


\subsection{All Compact Groups}
Every compact group is a projective limit of compact Lie groups.
This suggests an extension of the functor
$\sT : \Lie 
\rightarrow
\Red$
to a functor from compact groups to pro-reductive groups.

Tannaka reconstruction is an alternative way to think of such a functor.
If $G$ is a compact group, we get an affine group scheme $\sT (G)$.
Is this a weak equivalence of topological categories?
%It would be interesting to investigate both functors further from the viewpoint of $\infty$-categories.

%%%\subsection*{Conflict of Interest} The authors declare that they have no conflict of interest.

\end{document}